\newtheorem{theorem}{\color{black}\indent Theorem}[section]
\newtheorem{lemma}{\color{black}\indent Lemma}[section]
\newtheorem{remark}{\color{black}\indent Remark}[section]
\begin{document}
\title{\LARGE\bf Finite time blow-up for the heat flow of H-surface with constant mean curvature}
\author{Haixia Li}
 \date{}
 \maketitle

\footnotetext{\hspace{-1.9mm}$^\dag$Corresponding author.\\
Email addresses: lihaixia0611@126.com(H. Li).

\thanks{
$^*$Supported by NSFC (11626044), by Natural Science Foundation of Changchun Normal University(No. 2015-002) and
by Scientific Research Foundation for Talented Scholars of Changchun Normal University(No. RC2016-008).}}
\begin{center}
{\noindent\it\small School of Mathematics, Changchun Normal University,
 Changchun 130032, P.R. China}
\end{center}

\date{}
\maketitle

{\bf Abstract}\ In this paper, the authors consider an initial boundary value problem for the heat flow of
equation of surfaces with constant mean curvatures, which was investigated in
[On the heat flow of equation of surfaces of constant mean curvatures,
Manuscripta Mathematica, 2011, 134: 259-271] by Huang et al.,
where global well-posedness and finite time blow-up of regular solutions were obtained.
Their results are complemented in this paper in the sense that
some new conditions on the initial data are provided for the solutions to develop finite time singularity.

{\bf Keywords} H-surface; constant mean curvature; blow-up.

{\bf AMS Mathematics Subject Classification 2010:} 35K20, 58J35.

\section{Introduction}
\setcounter{equation}{0}

Suppose that $\Omega\subset \mathbb{R}^2$ is a bounded domain with smooth boundary $\partial \Omega$.
Given a continuous function $H: \mathbb{R}^3\rightarrow \mathbb{R}$, a map $u\in C^2(\Omega;\mathbb{R}^3)$ is called an
$H$-surface (parameterized over $\Omega$), if it satisfies
\begin{eqnarray}\label{1.1}
\Delta u=2H(u)u_x\wedge u_y,  \qquad (x,y)\in\Omega,
\end{eqnarray}
where $\wedge$ denotes the wedge product in $\mathbb{R}^3$. In fact,
if $u$ is a conformal representation of a surface $S$ in $\mathbb{R}^3$, i.e., $u_x\cdot u_y=0=|u_x|^2-|u_y|^2$,
then $H(u)$ is the mean curvature of $S$ at the point $u$.

After the pioneering work of Wente \cite{Wente} in 1969, the boundary value problem for the equation of $H$-surface \eqref{1.1}
has been studied extensively. We refer the interested readers to \cite{Brezis-Coron-1,Brezis-Coron-2,Hildebrandt,Struwe} for $H$-surface with
constant mean curvature and to \cite{Caldiroli-Musina,Duzaar-Grotowski,Rey} for $H$-surface with with variable $H$.

In this paper, we consider the parabolic counterpart of problem \eqref{1.1}, that is the following initial boundary value problem for the heat flow of
the equation of $H$-surface
\begin{eqnarray}\label{1.2}
\ \ \ \ \ \ \ \ \ \ \ \ \begin{cases}
u_t=\Delta u-2H(u)u_x\wedge u_y, & (x,y,t)\in\Omega\times(0,T),\\
u=\chi, & (x,y,t)\in\partial \Omega\times(0,T),\\
u(x,0)=u_0(x), &(x,y)\in \Omega,
\end{cases}
\end{eqnarray}
where $T\in(0,+\infty]$ is the maximal existence time of the solution $u(x,t)$, $u_0\in H^1(\Omega;\mathbb{R}^3)$,
$\chi\in H^{\frac{1}{2}}(\partial\Omega;\mathbb{R}^3)$ and $u_0\mid_{\partial\Omega}=\chi$.

In 1988, problem \eqref{1.2} (with the Dirichlet boundary condition replaced by free boundaries)
was employed by Struwe \cite{Struwe}, where the existence of surfaces of constant mean curvature $H=H_0$ was obtained,
under the assumption that $|H_0|\|u_0\|_{L^\infty(\Omega)}<1$.
Later, Rey \cite{Rey} showed that problem \eqref{1.2} with variable $H$ admits a unique global regular solution
$u\in C^{2+\alpha, 1+\frac{\alpha}{2}}(\overline{\Omega}\times(0,\infty);\mathbb{R}^3)$
when $u_0\in H^1(\Omega;\mathbb{R}^3)\cap L^\infty(\Omega;\mathbb{R}^3)$, $u_0\mid_{\partial\Omega}=\chi$ and
$\|H\|_{L^\infty(\mathbb{R}^3)}\|u_0\|_{L^\infty(\Omega)}<1$. When $u_0\in H^1(\Omega;\mathbb{R}^3)$ and
$\chi\in H^{\frac{3}{2}}(\partial\Omega;\mathbb{R}^3)$,  Chen and Levine \cite{Chen-Levine} removed the
assumption $\|H\|_{L^\infty(\mathbb{R}^3)}\|u_0\|_{L^\infty(\Omega)}<1$ and obtained the existence and uniqueness of short time
regular solution to problem \eqref{1.2}, with an additional assumption that
$$\int_\Omega |\nabla u|^2(\cdot,t)\leq \int_\Omega |\nabla u|^2(\cdot,s),\qquad 0\leq s\leq t.$$

There are also some works on global regular solutions to problem \eqref{1.2} with general $H$.
For example, for an arbitrary Lipschitz function $H$,
Wang \cite{Wang} proved that if $u\in H^1(\Omega\times(0,\infty);\mathbb{R}^3)$ is a weak solution to \eqref{1.2},
then $u\in C^{2,\alpha}(\Omega\times(0,\infty)\setminus\Sigma;\mathbb{R}^3)$, where
$$\Sigma=\bigcup_{t>0}\Sigma_t\subset \Omega\times(0,\infty)$$
is a closed subset, whose Lebesgue measure is zero and $\Sigma_t\subset \Omega\times\{t\}$ is finite for almost all $t>0$.
As for the finite time singularity of regular solutions to problem \eqref{1.2},
Huang et al. \cite{Huang} showed, when $H$ is a nontrivial constant and $\chi=0$,
that the regular solution to problem \eqref{1.2} blows up in finite time
when the initial energy is ``subcritical".

The aim of this paper is to reconsider the finite time singularity of regular solutions to
problem \eqref{1.2} and to give some new conditions on the initial data to assure finite time blow-up.
We have to admit that we do not invent any new strategies or new
methods to prove the finite time blow up results, but only apply the concavity argument (\cite{Han,Levine}) to a newly constructed auxiliary functional.
Henceforth we always assume that
\begin{equation}\label{1.3}
H\equiv H_0\in \mathbb{R}\setminus\{0\},\qquad \chi=0.
\end{equation}

The main results of this paper will be stated and proved in the next section.

\par
\section{Main results}
\setcounter{equation}{0}

In order to describe our main results, we first introduce some notations and lemmas.
If no confusion arises, we will always write
$\int_\Omega\cdot\ {\rm d}x{\rm d}y$ and $\int_0^t\int_\Omega\cdot\ {\rm d}x{\rm d}y{\rm d}s$
as $\int_\Omega\cdot$ and $\int_0^t\int_\Omega\cdot$, respectively.
We denote by $\|\cdot\|_{r}$ the $L^r(\Omega;\mathbb{R}^3)$ norm for $1\leq r\leq \infty$
and by $\|\cdot\|$ the Dirichlet norm in $H_0^1(\Omega;\mathbb{R}^3)$,
i.e., $\|u\|=\|\nabla u\|_2$ for any $u\in H_0^1(\Omega;\mathbb{R}^3)$.
The potential energy functional and the Nehari's functional associated with problem \eqref{1.2}
are defined, respectively, by
\begin{equation}\label{e}
E(u)=\frac{1}{2}\|u\|^2+\frac{2H_0}{3}\int_\Omega u\cdot u_x\wedge u_y,
\end{equation}
\begin{equation}\label{n}
N(u)=\|u\|^2+2H_0\int_\Omega u\cdot u_x\wedge u_y.
\end{equation}

For any regular solution $u(x,t)$ to problem \eqref{1.2},
the functional $E(u)$ satisfies the following energy identity.

\begin{lemma}\label{lem-ei}
(\cite{Huang}) For $0<T\leq \infty$, suppose that $u:\Omega\times[0,T)\rightarrow \mathbb{R}^3$ is a regular solution to
problem \eqref{1.2}. Then it holds
\begin{equation}\label{ei}
\int_{t_1}^{t_2}\int_\Omega|u_t|^2+E(u(t_2))=E(u(t_1)),\quad \forall\ 0\leq t_1\leq t_2<T.
\end{equation}
\end{lemma}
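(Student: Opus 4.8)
The plan is to recognize \eqref{ei} as the energy-dissipation law for the gradient-type flow \eqref{1.2} and to derive it by differentiating $E(u(t))$ in time and then integrating over $[t_1,t_2]$. Concretely, it suffices to establish the pointwise-in-time relation
\begin{equation*}
\frac{d}{dt}E(u(t))=-\int_\Omega|u_t|^2,
\end{equation*}
after which integrating from $t_1$ to $t_2$ and applying the fundamental theorem of calculus yields \eqref{ei}. Since $u$ is a regular solution it is smooth enough on $\Omega\times[0,T)$ to justify differentiation under the integral sign and the integrations by parts below, and since $u\equiv\chi$ is time-independent on $\partial\Omega$ (indeed $\chi=0$ by \eqref{1.3}) we have $u_t=0$ on $\partial\Omega$, a fact used repeatedly to discard boundary contributions.

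First I would handle the Dirichlet part. Writing $\tfrac12\|u\|^2=\tfrac12\int_\Omega|\nabla u|^2$ and differentiating gives $\int_\Omega\nabla u\cdot\nabla u_t$; integrating by parts and using $u_t=0$ on $\partial\Omega$ turns this into $-\int_\Omega\Delta u\cdot u_t$.

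The heart of the argument is the variation of the volume functional $V(u):=\int_\Omega u\cdot(u_x\wedge u_y)$. Write $[a,b,c]:=a\cdot(b\wedge c)$ for the scalar triple product, which is totally antisymmetric in its three arguments. Differentiating in $t$ produces three terms,
\begin{equation*}
\frac{d}{dt}V(u)=\int_\Omega[u_t,u_x,u_y]+\int_\Omega[u,u_{tx},u_y]+\int_\Omega[u,u_x,u_{ty}].
\end{equation*}
In the second term I would integrate by parts in $x$ and in the third term in $y$; each boundary term carries a factor $u_t$ and hence vanishes on $\partial\Omega$, leaving
\begin{equation*}
\int_\Omega[u,u_{tx},u_y]+\int_\Omega[u,u_x,u_{ty}]=-\int_\Omega\big([u_x,u_t,u_y]+[u,u_t,u_{xy}]+[u_y,u_x,u_t]+[u,u_{xy},u_t]\big).
\end{equation*}
The two mixed-second-derivative terms $[u,u_t,u_{xy}]$ and $[u,u_{xy},u_t]$ are negatives of one another and cancel, while antisymmetry gives $-[u_x,u_t,u_y]=-[u_y,u_x,u_t]=[u_t,u_x,u_y]$. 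Hence the two integrals sum to $2\int_\Omega[u_t,u_x,u_y]$, and therefore $\frac{d}{dt}V(u)=3\int_\Omega u_t\cdot(u_x\wedge u_y)$. This factor $3$ is precisely what the coefficient $\tfrac{2H_0}{3}$ in \eqref{e} is designed to absorb.

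Combining the two computations,
\begin{equation*}
\frac{d}{dt}E(u(t))=-\int_\Omega\Delta u\cdot u_t+2H_0\int_\Omega u_t\cdot(u_x\wedge u_y)=-\int_\Omega u_t\cdot\big(\Delta u-2H_0\,u_x\wedge u_y\big),
\end{equation*}
which by the equation in \eqref{1.2} equals $-\int_\Omega|u_t|^2$; integrating over $[t_1,t_2]$ gives \eqref{ei}. I expect the only genuine obstacle to be the volume-functional variation: one must track the antisymmetry of $[a,b,c]$ correctly and verify that every integration-by-parts boundary term carries the factor $u_t$ so that it vanishes, and the factor-of-$3$ bookkeeping is where a sign or combinatorial slip is easiest to make. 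Everything else is routine once the regularity of $u$ is invoked to legitimize differentiating under the integral.
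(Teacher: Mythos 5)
Your proof is correct. The paper does not prove this lemma at all---it is quoted directly from Huang et al.\ \cite{Huang}---and your derivation (differentiate $E(u(t))$, compute the first variation of the volume term $\int_\Omega u\cdot u_x\wedge u_y$ to get the factor $3$ that the coefficient $\tfrac{2H_0}{3}$ absorbs, then substitute the PDE to obtain $\tfrac{d}{dt}E(u(t))=-\int_\Omega|u_t|^2$) is the standard argument, with the antisymmetry bookkeeping and the vanishing of the boundary terms via $u_t=0$ on $\partial\Omega$ both handled correctly.
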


The main result of this paper is proved with the help of the following concavity lemma \cite{Levine},
whose proof can be found in \cite{Li}

\begin{lemma}\label{concave}
Suppose that a positive, twice-differentiable function $\psi(t)$ satisfies the inequality
$$\psi''(t)\psi(t)-(1+\theta)(\psi'(t))^2\geq0,$$
where $\theta>0$. If $\psi(0)>0$, $\psi'(0)>0$, then $\psi(t)\rightarrow\infty$ as $t\rightarrow t_*\leq t^*=\frac{\psi(0)}{\theta\psi'(0)}$.
\end{lemma}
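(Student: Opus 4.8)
The plan is to reduce the given second-order differential inequality to a convexity/concavity statement about a cleverly chosen auxiliary function, and then exploit the elementary fact that a positive concave function which is bounded above by a decreasing line must be driven to zero in finite time. The natural candidate here is
$$\phi(t)=\psi(t)^{-\theta},$$
which is well-defined, positive, and twice differentiable for as long as $\psi(t)$ remains positive and finite.

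First I would compute the derivatives of $\phi$. A direct differentiation gives $\phi'=-\theta\,\psi^{-\theta-1}\psi'$, and differentiating once more yields
$$\phi''=-\theta\,\psi^{-\theta-2}\bigl(\psi''\psi-(1+\theta)(\psi')^2\bigr).$$
The quantity in parentheses is exactly the left-hand side of the hypothesized inequality, hence nonnegative; since $\psi>0$ and $\theta>0$, the prefactor $-\theta\,\psi^{-\theta-2}$ is strictly negative, so $\phi''(t)\leq 0$. Thus $\phi$ is concave on the interval of existence, which is the crux of the argument.

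Next I would use concavity to obtain the linear majorant $\phi(t)\leq\phi(0)+\phi'(0)\,t$. Evaluating the data at $t=0$ gives $\phi(0)=\psi(0)^{-\theta}>0$ and, because $\psi(0)>0$ and $\psi'(0)>0$,
$$\phi'(0)=-\theta\,\psi(0)^{-\theta-1}\psi'(0)<0.$$
The resulting barrier $\phi(t)\leq\psi(0)^{-\theta}-\theta\,\psi(0)^{-\theta-1}\psi'(0)\,t$ is strictly decreasing in $t$ and crosses zero precisely at $t^{*}=\psi(0)/\bigl(\theta\psi'(0)\bigr)$.

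Finally I would invoke the positivity of $\phi$ to conclude. As long as $\psi$ stays finite one has $\phi(t)=\psi(t)^{-\theta}>0$, yet the linear upper bound becomes nonpositive for every $t\geq t^{*}$; these two facts cannot coexist beyond $t^{*}$. Hence $\psi$ must fail to remain finite at some time $t_{*}\leq t^{*}$, and there $\phi(t)\to 0$, equivalently $\psi(t)\to\infty$. I expect the only delicate point to be this last step: one must argue that $\phi$ being forced to (or below) zero genuinely corresponds to blow-up of $\psi$ rather than to a breakdown of differentiability. This is resolved by the observation that $\psi^{-\theta}$ remains strictly positive wherever $\psi$ is finite, so the decreasing barrier leaves no alternative but escape of $\psi$ to $+\infty$ at or before $t^{*}$.
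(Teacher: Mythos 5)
Your proposal is correct: the substitution $\phi=\psi^{-\theta}$, the computation showing $\phi''\leq 0$, and the tangent-line barrier vanishing at $t^{*}=\psi(0)/(\theta\psi'(0))$ constitute precisely the classical concavity argument of Levine that the paper invokes (the paper itself omits the proof and refers to \cite{Levine,Li}). Your closing remark correctly identifies and resolves the only delicate point, namely that the positivity of $\psi^{-\theta}$ on the interval of existence forces $\psi$ to escape to $+\infty$ at some $t_{*}\leq t^{*}$.
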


We are now in the position to state and prove the main results of this paper.
\begin{theorem}\label{main}
Let assumption \eqref{1.3} hold. Assume that $u_0\in H_0^1(\Omega;\mathbb{R}^3)$ satisfying
\begin{equation}\label{initial}
E(u_0)<\frac{\lambda_1}{6}\|u_0\|_2^2,
\end{equation}
were $\lambda_1>0$ is the first eigenvalue of $-\Delta$ in $\Omega$ under homogeneous Dirichlet boundary condition.
Then the local regular solution $u(x,t)$ to problem \eqref{1.2} blows up in finite time.
Moreover, the upper bound for $T$ has the following form:
$T\leq \dfrac{16\|u_0\|_2^2}{\lambda_1\|u_0\|_2^2-6E(u_0)}$.
\end{theorem}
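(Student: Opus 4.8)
The plan is to run the concavity argument of Lemma \ref{concave} on the auxiliary functional
$$\psi(t) = \int_0^t \|u(s)\|_2^2\,\mathrm{d}s + (T_0-t)\|u_0\|_2^2 + \beta(t+\tau)^2,$$
where $\beta,\tau>0$ and $T_0>0$ are parameters fixed at the end. First I would record the two identities that drive everything. Testing \eqref{1.2} against $u$ and integrating by parts (using $\chi=0$) gives $\frac{\mathrm{d}}{\mathrm{d}t}\|u\|_2^2 = 2\int_\Omega u\cdot u_t = -2N(u)$, while a direct comparison of \eqref{e} and \eqref{n} yields the algebraic relation $-2N(u) = \|u\|^2 - 6E(u)$. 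Hence $\psi'(t) = \|u(t)\|_2^2 - \|u_0\|_2^2 + 2\beta(t+\tau) = 2\int_0^t\int_\Omega u\cdot u_t + 2\beta(t+\tau)$, and differentiating once more while inserting the energy identity \eqref{ei} in the form $E(u(t)) = E(u_0) - \int_0^t\int_\Omega|u_t|^2$ gives
$$\psi''(t) = \|u(t)\|^2 - 6E(u_0) + 6\int_0^t\int_\Omega|u_t|^2 + 2\beta.$$

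Next I would produce the quadratic lower bound on $\psi''$ required by Lemma \ref{concave}. Applying the Cauchy–Schwarz inequality in $L^2(\Omega\times(0,t))\oplus\mathbb{R}$ to the pair $(u,\sqrt\beta(t+\tau))$ and $(u_t,\sqrt\beta)$ gives
$$\Big(\tfrac12\psi'(t)\Big)^2 \le \Big(\int_0^t\|u\|_2^2 + \beta(t+\tau)^2\Big)\Big(\int_0^t\int_\Omega|u_t|^2 + \beta\Big) \le \psi(t)\Big(\int_0^t\int_\Omega|u_t|^2 + \beta\Big),$$
the last step using $(T_0-t)\|u_0\|_2^2\ge0$ for $t\le T_0$. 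Choosing $\theta=\tfrac12$, so that $4(1+\theta)=6$, the target inequality $\psi\psi''\ge(1+\theta)(\psi')^2$ reduces, after cancelling the matching dissipation integrals $6\int_0^t\int_\Omega|u_t|^2$, to the pointwise requirement
$$\|u(t)\|^2 \ge 6E(u_0) + 4\beta, \qquad t\in[0,T_0].$$

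The hard part will be precisely this uniform lower bound, since a priori $\|u(t)\|^2$ could decay; to secure it I would show the solution stays in the unstable regime. Writing $\phi(t)=\|u(t)\|_2^2$, the identities above combined with the Poincaré inequality $\|u\|^2\ge\lambda_1\|u\|_2^2$ and the monotonicity $E(u(t))\le E(u_0)$ from \eqref{ei} give $\phi'(t)=\|u\|^2-6E(u(t))\ge\lambda_1\phi(t)-6E(u_0)$. Hypothesis \eqref{initial} is exactly $\phi(0)=\|u_0\|_2^2>6E(u_0)/\lambda_1$, so a comparison argument applied to $g(t)=\phi(t)-6E(u_0)/\lambda_1$, which satisfies $g(0)>0$ and $g'\ge\lambda_1 g$, forces $g$, and hence $\phi$, to be nondecreasing. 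Consequently $\|u(t)\|^2\ge\lambda_1\phi(t)\ge\lambda_1\|u_0\|_2^2$ for all $t$, and the required bound holds as soon as $\beta\le\tfrac14\big(\lambda_1\|u_0\|_2^2-6E(u_0)\big)$, a positive quantity by \eqref{initial}.

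Finally I would fix the parameters and optimize the resulting lifespan estimate. Taking $\beta=\tfrac14\big(\lambda_1\|u_0\|_2^2-6E(u_0)\big)$ and any $\tau>0$ makes $\psi(0)=T_0\|u_0\|_2^2+\beta\tau^2>0$ and $\psi'(0)=2\beta\tau>0$, so Lemma \ref{concave} yields blow-up of $\psi$ by $t^*=\psi(0)/(\theta\psi'(0))=T_0\|u_0\|_2^2/(\beta\tau)+\tau$. Since the non-integral terms of $\psi$ stay finite at finite time, divergence of $\psi$ forces $\int_0^t\|u\|_2^2\to\infty$ and hence $\|u(t)\|_2\to\infty$, contradicting continuation of the regular solution; thus $T\le t^*$, provided the estimates remain valid, i.e.\ $t^*\le T_0$. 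Minimizing $t^*$ over $\tau$ gives $\tau=\sqrt{T_0\|u_0\|_2^2/\beta}$ and $t^*=2\sqrt{T_0\|u_0\|_2^2/\beta}$; the constraint $t^*\le T_0$ is saturated at $T_0=4\|u_0\|_2^2/\beta$, whence
$$T\le t^*=\frac{4\|u_0\|_2^2}{\beta}=\frac{16\|u_0\|_2^2}{\lambda_1\|u_0\|_2^2-6E(u_0)},$$
which is exactly the stated bound.
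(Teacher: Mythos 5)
Your proposal is correct, and its analytic core --- the auxiliary functional $\psi$, the choice $\theta=\tfrac12$, the Cauchy--Schwarz estimate $\bigl(\tfrac12\psi'\bigr)^2\le\psi(t)\bigl(\int_0^t\|u_s\|_2^2+\beta\bigr)$, and the reduction to the pointwise bound $\|u(t)\|^2\ge 6E(u_0)+4\beta$, secured via Poincar\'e's inequality and the monotonicity of $\|u(t)\|_2^2$ --- coincides with Step II of the paper's proof. Where you genuinely differ is in the logical architecture. The paper first establishes finite-time blow-up by a separate contradiction argument (its Step I): assuming global existence and, without loss of generality, $E(u(t))>0$ for all $t$ (invoking Theorem 1.1 of \cite{Huang} otherwise), it derives exponential growth of $\|u(t)\|_2^2$ from a Gronwall inequality for $\|u(t)\|_2^2-\tfrac{6}{\lambda_1}E(u(t))$ and contradicts this with the sublinear bound $\|u(t)\|_2\le\|u_0\|_2+\sqrt{E(u_0)t}$ obtained from the energy identity; only then does it define the functional using the actual finite blow-up time $T$ and extract the quantitative bound by optimizing over $\sigma$ and $\beta$. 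You instead keep the horizon $T_0$ as a free parameter and close the argument self-consistently by choosing $T_0=4\|u_0\|_2^2/\beta$ so that $t^*\le T_0$, obtaining blow-up and the upper bound in a single stroke; this dispenses with Step I entirely and in particular avoids both the case distinction on the sign of $E(u(t))$ and the appeal to the earlier result of Huang et al. The one place to tighten your write-up is the concluding contradiction: rather than ``thus $T\le t^*$, provided the estimates remain valid,'' you should argue that if $T>T_0$ then $\psi$ is finite and continuous on $[0,T_0]$ while Lemma \ref{concave} forces $\psi\to\infty$ at some $t_*\le t^*=T_0$, which is impossible, hence $T\le T_0$; with that phrasing the bound $T\le 16\|u_0\|_2^2/\bigl(\lambda_1\|u_0\|_2^2-6E(u_0)\bigr)$ follows exactly as you state.
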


\begin{proof}
The proof will be divided into two steps.

\noindent{\bf Step I: Finite time blow-up for local regular solution.}

Suppose by contradiction that the local regular solution $u(x,t)$ exists globally.
Then $\|u(\cdot,t)\|_2$ is well-defined for all $t\geq0$.
Without loss of generality, we may assume that $E(u(t))>0$ for all $t\geq0$.
Otherwise by Theorem 1.1 in \cite{Huang} we know that $u(x,t)$ blows up in finite time.

First, multiplying both sides of the first equation in \eqref{1.2} by $u$ and integrating over $\Omega$ one obtains
\begin{equation}\label{monotonicity}
\dfrac{d}{dt}\|u(t)\|_2^2=-2\|u\|^2-4H_0\int_\Omega u\cdot u_x\wedge u_y=-2N(u(t)),\qquad t\in(0,\infty).
\end{equation}
From \eqref{e}, \eqref{n}  and the definition of $\lambda_1$ we have
\begin{equation}\label{n-e}
N(u)=3\Big[E(u)-\frac{1}{6}\|u\|^2\Big]\leq 3\Big[E(u)-\frac{\lambda_1}{6}\|u\|_2^2\Big].
\end{equation}
Substituting \eqref{n-e} into \eqref{monotonicity} to yield
\begin{equation}\label{monotonicity-2}
\dfrac{d}{dt}\|u(t)\|_2^2\geq\lambda_1\|u(t)\|_2^2-6E(u(t)),\qquad t\in(0,\infty).
\end{equation}
Combining \eqref{monotonicity-2} with \eqref{ei} one sees that
\begin{equation}\label{monotonicity-3}
\dfrac{d}{dt}\Big[\|u(t)\|_2^2-\dfrac{6}{\lambda_1}E(u(t))\Big]
=\dfrac{d}{dt}\|u(t)\|_2^2+\dfrac{6}{\lambda_1}\|u_t(t)\|_2^2\geq\lambda_1\Big[\|u(t)\|_2^2-\dfrac{6}{\lambda_1}E(u(t))\Big].
\end{equation}
Recalling that $\|u_0\|_2^2-\dfrac{6}{\lambda_1}E(u_0)>0$, we obtain,
by applying Gronwall's inequality to \eqref{monotonicity-3}, that
\begin{equation}\label{monotonicity-4}
\|u(t)\|_2^2-\dfrac{6}{\lambda_1}E(u(t))\geq \Big[\|u_0\|_2^2-\dfrac{6}{\lambda_1}E(u_0)\Big]e^{\lambda_1t}.
\end{equation}
Therefore,
\begin{equation}\label{monotonicity-5}
\begin{split}
\|u(t)\|_2^2&\geq \Big[\|u_0\|_2^2-\dfrac{6}{\lambda_1}E(u_0)\Big]e^{\lambda_1t}+\dfrac{6}{\lambda_1}E(u(t))\\
&\geq\Big[\|u_0\|_2^2-\dfrac{6}{\lambda_1}E(u_0)\Big]e^{\lambda_1t},\qquad t\in(0,\infty),
\end{split}
\end{equation}
since $E(u(t))\geq0$ on $[0,\infty)$.

On the other hand, in view of Minkowski inequality, H\"{o}lder inequality, the energy identity \eqref{ei}
and the fact that $E(u(t))>0$ for $t\geq0$ we obtain
\begin{equation*}
\begin{split}
\|u(t)\|_2\leq&\|u_0\|_2+\|u(t)-u_0\|_2=\|u_0\|_2+\|\int_0^t u_\tau{\rm d}\tau\|_2\\
\leq&\|u_0\|_2+\int_0^t \|u_\tau\|_2{\rm d}\tau\leq\|u_0\|_2
+\Big(\int_0^t \|u_\tau\|_2^2{\rm d}\tau\Big)^{1/2}t^{1/2}\\
=&\|u_0\|_2+[E(u_0)-E(u(t))]^{1/2}t^{1/2}\\
\leq&\|u_0\|_2+\sqrt{E(u_0)t},
\end{split}
\end{equation*}
which contradicts with \eqref{monotonicity-5} for $t$ sufficiently large.
Therefore, the local regular solution $u(x,t)$ must blow up in finite time.

\noindent{\bf Step II: Upper bound for the blow-up time.}

Form now on, we denote by $T>0$ the blow-up time of $u(x,t)$, which is finite by Step I.
From \eqref{n-e} and \eqref{monotonicity-4} we see that $N(u(t))<0$ on $[0,T)$,
which, together with \eqref{monotonicity}, implies that $\|u(t)\|_2^2$ is strictly increasing on $[0,T)$.

For any $\beta>0$ and $\sigma>0$, we define the auxiliary functional
\begin{equation}\label{F}
F(t)=\int_0^t\|u(s)\|_2^2\mathrm{d}s+(T-t)\|u_0\|_2^2+\beta(t+\sigma)^2,\qquad t\in[0,T).
\end{equation}
Then $F(0)=T\|u_0\|_2^2+\beta\sigma^2>0$. Taking derivative with respect to $t$ we obtain
\begin{equation}\label{F1}
\begin{split}
F'(t)&=\|u(t)\|_2^2-\|u_0\|_2^2+2\beta(t+\sigma)=\int_0^t\frac{d}{ds}\|u(s)\|_2^2+2\beta(t+\sigma)\\
&=2\int_0^t\int_\Omega u\cdot u_s+2\beta(t+\sigma),
\end{split}
\end{equation}
and $F'(0)=2\beta\sigma>0$. Taking derivative again and recalling \eqref{e}, \eqref{n} and \eqref{ei},
one arrives at
\begin{equation}\label{F2}
\begin{split}
F''(t)=&2\int_\Omega u\cdot u_t+2\beta=-2N(u(t))+2\beta(>0)\\
\geq& -6E(u(t))+\|u(t)\|^2+2\beta\\
   =& -6E(u_0)+6\int_0^t\|u_s(s)\|_2^2\mathrm{d}s+\|u(t)\|^2+2\beta.
\end{split}
\end{equation}
Noticing that $F''(t)>0$ on $[0,T)$, $F'(t)$ is monotone increasing on $[0,T)$ and $F'(t)\geq F'(0)>0$.
Therefore, $F(t)$ is strictly increasing on $[0,T)$.

In view of \eqref{F}-\eqref{F2} and recalling the monotonicity of $\|u(t)\|_2^2$, we have,
for any $t\in[0,T)$ and $\beta\in\Big(0,\frac{\lambda_1\|u_0\|_2^2-6E(u_0)}{4}\Big]$, that
\begin{equation}\label{concave-1}
\begin{split}
&F(t)F''(t)-\dfrac{3}{2}(F'(t))^2=F(t)F''(t)-6\Big[\int_0^t\int_\Omega u\cdot u_s+\beta(t+\sigma)\Big]^2\\
=&F(t)F''(t)+6\Big[\eta(t)-(F-(T^*-t)\|u_0\|_2^2)(\int_0^t\|u_s(s)\|_2^2\mathrm{d}s+\beta)\Big]\\
\geq&F(t)F''(t)-6F(t)\Big[\int_0^t\|u_s(s)\|_2^2\mathrm{d}s+\beta\Big]\\
\geq&F(t)\Big[\|u(t)\|^2-6E(u_0)-4\beta\Big]\\
\geq&F(t)\Big[\lambda_1\|u(t)\|_2^2-6E(u_0)-4\beta\Big]\\
\geq&F(t)\Big[\lambda_1\|u_0\|_2^2-6E(u_0)-4\beta\Big]\\
\geq&0,
\end{split}
\end{equation}
where
$$\eta(t)=\Big[\int_0^t\|u(s)\|_2^2\mathrm{d}s+\beta(t+\sigma)^2\Big]\Big[\int_0^t\|u_s(s)\|^2\mathrm{d}s+\beta\Big]
-\Big[\int_0^t\int_\Omega u\cdot u_s+\beta(t+\sigma)\Big]^2,\quad t\in[0,T),$$
the nonnegativity of which can be verified by simply using Cauchy-Schwarz inequality
and H\"{o}lder's inequality.
Now applying Lemma \ref{concave} to $F(t)$ one sees that
\begin{equation}\label{F3}
\lim\limits_{t\rightarrow T}F(t)=\infty,
\end{equation}
where
\begin{equation*}
T\leq\dfrac{2F(0)}{F'(0)}=\dfrac{2(T\|u_0\|_2^2+\beta\sigma^2)}{2\beta\sigma}=\dfrac{\|u_0\|_2^2}{\beta\sigma}T+\sigma,
\end{equation*}
or
\begin{equation}\label{T1}
T\Big(1-\dfrac{\|u_0\|_2^2}{\beta\sigma}\Big)\leq \sigma,
\end{equation}
for any $\beta\in\Big(0,\frac{\lambda_1\|u_0\|_2^2-6E(u_0)}{4}\Big]$ and $\sigma>0$.

Fix a $\beta_0\in\Big(0,\frac{\lambda_1\|u_0\|_2^2-6E(u_0)}{4}\Big]$.
Then we have $0<\dfrac{\|u_0\|_2^2}{\beta_0\sigma}<1$ for any $\sigma\in(\dfrac{\|u_0\|_2^2}{\beta_0},+\infty)$,
which, combined with \eqref{T1}, guarantees that
\begin{equation}\label{T2}
T\leq \dfrac{\beta_0\sigma^2}{\beta_0\sigma-\|u_0\|_2^2}.
\end{equation}
Minimizing the right hand side term in \eqref{T2} for $\sigma\in\Big(\dfrac{\|u_0\|_2^2}{\beta_0},+\infty\Big)$ yields
\begin{equation}\label{T3}
T\leq\dfrac{4\|u_0\|_2^2}{\beta_0},\quad \beta_0\in\Big(0,\frac{\lambda_1\|u_0\|_2^2-6E(u_0)}{4}\Big].
\end{equation}
Minimizing the right hand side term in \eqref{T3} for $\beta_0\in\Big(0,\frac{\lambda_1\|u_0\|_2^2-6E(u_0)}{4}\Big]$ we finally obtain
\begin{equation*}
T\leq \dfrac{16\|u_0\|_2^2}{\lambda_1\|u_0\|_2^2-6E(u_0)}.
\end{equation*}
The proof is complete.
\end{proof}

\begin{remark}
In \cite{Huang}, the authors showed that the local regular solution
$u(x,t)$ to problem \eqref{1.2} blows up in finite time when $E(u_0)<\dfrac{4\pi}{3H_0^2}$
and $|\int_\Omega u_0\cdot u_{0x}\wedge u_{0y}|>\dfrac{4\pi}{|H_0|^3}$
(the latter is unnecessary when $E(u_0)<0$). It is easy to verify that
this condition and assumption \eqref{initial} can not be deduced from
each other. Therefore, we obtain a new blow-up criterion for problem \eqref{1.2}.
\end{remark}

{\bf Acknowledgement}\\
The author would like to express her sincere gratitude to Professor Wenjie Gao in Jilin University for his enthusiastic
guidance and constant encouragement.


\begin{thebibliography}{xx}

\bibitem{Brezis-Coron-1}
H. Brezis, J. M. Coron, Convergence of solutions of H-systems or how to blow bubbles,
Arch. Ration. Mech. Anal., 89(1985)\ 21-56.

\bibitem{Brezis-Coron-2}
H. Brezis, J. M. Coron, Multiple solutions of H-systems and Rellich's conjecture,
Comm. Pure. Appl. Math., 37(2)(1984)\ 149-187.

\bibitem{Caldiroli-Musina}
P. Caldiroli, R. Musina, The Dirichlet problem for H-systems with small boundary data:
Blowup phenomena and nonexistence results, Arch. Ration. Mech. Anal., 181(2006)\ 142-183.

\bibitem{Chen-Levine}
Y. Chen, S. Levine, The existence of the heat flow for H-systems,
Disc. Cont. Dyna. Syst., 8(1)(2002)\ 219-236.

\bibitem{Duzaar-Grotowski}
F. Duzaar, J. F. Grotowski, Existence and regularity for higher-dimensional H-systems,
Duke Math. J., 101(3)(2000)\ 459-485.

\bibitem{Han}
Y. Z. Han, Finite time blowup for a semilinear pseudo-parabolic equation with general nonlinearity,
Appl. Math. Letters, 99(2020)\ 105986.

\bibitem{Huang}
T. Huang, Z, Tan, C. Y. Wang, On the heat flow of equation of surfaces of constant mean curvature,
Manuscripta Math, 134(1/2)(2011)\ 259-271.

\bibitem{Levine}
H. A. Levine, Some nonexistence and instability theorems for solutions of formally parabolic equation of the form $Pu_t=-Au+\mathcal{F}u$, Arch. Ration. Mech. Anal., 51(1973)\ 371-386.

\bibitem{Li}
H. X. Li, Lifespan of solutions to a parabolic type Kirchhoff equation with time-dependent nonlinearity,
Evolution Equations and Control Systems, 2020, doi:10.3934/eect.2020088.

\bibitem{Hildebrandt}
S. Hildebrandt, On the Plateau problem for surfaces of constant mean curvature,
Comm. Pure. Appl. Math., 23(1970)\ 97-114.

\bibitem{Wang}
C. Y. Wang, Partial regularity for flows of H-surfaces II,
Electron. J. Diff. Eqns., 1999(08)(1999)\ 1-8.

\bibitem{Wente}
H. C. Wente, An existence theorem for surfaces of constant mean curvature,
J. Math. Anal. Appl., 26(1969)\ 318-344.

\bibitem{Rey}
O. Rey, Heat flow for the equation of surfaces with prescribed mean curvature,
Math. Ann., 297(1991)\ 123-146.

\bibitem{Struwe}
M. Struwe, The existence of surfaces of constant mean curvature with free boundaries,
Acta. Math., 160(1-2)(1988)\ 19-64.

\end{thebibliography}
\end{document}